\newcommand{\Vol}{\mbox{\rm Vol}}
\newtheorem{theorem}{Theorem}[section]
\theoremstyle{remark}
 \theoremstyle{theorem}
  \newtheorem*{reptheorem}{Theorem \ref{Thm Augmentation}}
 \theoremstyle{theorem}
 \newtheorem*{reptheorem1}{Theorem \ref{Thm Upper Bound}}
\title[]{Simplicial volume of links from link diagrams}
\author{Oliver Dasbach \and Anastasiia Tsvietkova}
\date{}
\subjclass[2010]{57M25, 57M27,  57M50}
\begin{document}

 \footnotesize
 \begin{abstract} The hyperbolic volume of a link complement is known to be unchanged when a half-twist is added to a link diagram, and a suitable 3-punctured sphere is present in the complement. We generalize this to the simplicial volume of link complements by analyzing the corresponding toroidal decompositions. We then use it to prove a refined upper bound for the volume in terms of twists of various lengths for links.  
\end{abstract}
 
\maketitle
\normalsize

\section{Introduction}

Determining the hyperbolic volume of a link complement in $S^3$ from a link diagram has been a topic of significant interest. Among the results of this nature are upper and lower bounds for the volume in terms of characteristics of a link diagram \cite{Adams:Thesis, FKP:Bounds, FKP:Guts, Purcell:TwistedTorus, Lackenby:Volume, Purcell:Twisted}, polynomials from link diagrams whose roots allow to compute hyperbolic structure and volume \cite{SakumaWeeks, Tsvietkova:2Bridge, TT:HyperbolicStructures}, and the results stating that, under the presence of a suitable embedded three-punctured sphere in the link complement, certain operations on link diagrams ``respect" the volume \cite{Adams:3punctured}. In particular, one can ``sum" link diagrams so that the hyperbolic volume of the links is additive, and one can add a crossing so that the volume of the link complement is unchanged.  In this note, we generalize the latter result by Colin Adams from hyperbolic to simplicial volume of links. We then use it to generalize the refined upper bound for the volume in terms of twists of various lengths \cite{DasbachTsvietkova} from alternating hyperbolic links to all links.

A \textit{full twist} is a bigon in a link diagram, and a \textit{half-twist} is a crossing. A \textit{crossing circle} is a link component that travels around two link strands near a crossing, as on Figure \ref{AugmentingA}. Adding a crossing circle to a diagram is called \textit{augmenting} the crossing. The proofs of several bounds for hyperbolic volume of links \cite{DasbachTsvietkova, Lackenby:Volume, Purcell:Twisted} are built on passing from an original link to a link for which a suitable polyhedral decomposition can be obtained. This is achieved by augmenting certain crossings and by removing adjacent full twists and half-twists. A half-twist may be removed without changing the volume due to Adams' result (\cite{Adams:3punctured}) stating that the hyperbolic volume is not changed under the operation depicted in Figure \ref{Augmenting}.
 
We generalize Adams' theorem as follows.

 \begin{reptheorem}  Let $M$ be a non-split link with a
 link diagram for which some part appears as in Figure \ref{AugmentingA}. Let $M'$ be the link obtained by
 replacing the tangle in the diagram of $M$ depicted in Figure \ref{AugmentingA} by the tangle
 in Figure \ref{AugmentingB}. Then the simplicial volume of $S^3 - M'$ equals the simplicial volume of $S^3- M$.
 \end{reptheorem}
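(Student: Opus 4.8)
The plan is to exhibit $S^3-M'$ as a \emph{mutation} of $S^3-M$ along a thrice-punctured sphere and to show that such a mutation preserves simplicial volume by analyzing the toroidal (JSJ) decomposition. Let $C$ be the crossing circle of Figure~\ref{AugmentingA}, let $D$ be the disk it bounds, which meets the two strands in two points, and set $X:=S^3-M$ and $P:=D\cap X$. Then $P$ is a thrice-punctured sphere whose boundary consists of the longitude of $C$ (the boundary of the meridian disk of the solid torus $S^3-N(C)$) together with meridians of the two strands. First I would check that $P$ is essential in $X$: a thrice-punctured sphere carries no non-peripheral essential simple closed curve, so a compressing disk for $P$ would be bounded by a meridian of a strand or by the longitude of $C$, and either forces $M$ to be split; boundary compressions are ruled out in the same spirit, and in the remaining degenerate configurations one checks by hand that $M'$ is ambient isotopic to $M$. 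Next I would observe that $S^3-M'$ is obtained from $X$ by cutting along $P$ and re-gluing the two resulting copies of $P$ by the half-twist homeomorphism $h$ of the thrice-punctured sphere --- the one transposing the two strand punctures and fixing the puncture on $\partial N(C)$ --- since re-gluing the two sides of the cut-open punctured meridian disk with a half twist is exactly the local move replacing the tangle of Figure~\ref{AugmentingA} by that of Figure~\ref{AugmentingB}. Since $h$ is orientation-preserving on $P$, this mutation respects orientations.

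The core of the argument is that this mutation does not change the simplicial volume, and here I would work piece by piece through the JSJ decomposition. Put $P$ in minimal position with respect to the system $\mathcal{T}$ of JSJ tori of $X$. Because every essential simple closed curve on a thrice-punctured sphere is peripheral, and because a finite-volume hyperbolic $3$-manifold contains no essential annulus, an innermost-circle and innermost-annulus analysis of $P\cap\mathcal{T}$ should allow one to isotope $P$ off $\mathcal{T}$, so that $P$ lies in a single JSJ piece $Y$ (it cannot be pushed into a product neighborhood $T^2\times I$ of a JSJ torus, which admits no incompressible thrice-punctured sphere; the case in which the decomposition is trivial is subsumed). If $Y$ is hyperbolic, then $P$ is isotopic in $Y$ to a totally geodesic thrice-punctured sphere --- thrice-punctured spheres are rigid, the geometric input we borrow from \cite{Adams:3punctured} --- and $h$ is realized by an isometry of $P$, because the isometry group of the complete hyperbolic thrice-punctured sphere contains the transposition of two cusps (realized, for instance, by $z\mapsto 1-z$), which is orientation-preserving on the surface; hence cutting $Y$ along $P$ and re-gluing by $h$ produces a manifold $Y'$ carrying a complete hyperbolic metric with $\Vol(Y')=\Vol(Y)$. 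If $Y$ is Seifert-fibered, then $\|Y\|=0$; here $P$ is necessarily horizontal (a pair of pants cannot be a union of fibers), the mapping class group of the thrice-punctured sphere allowing permutations of the boundary is the finite group $S_3$, so the mutation only reshuffles the Seifert data and $Y'$ is again a graph manifold. Either way, the hyperbolic pieces of the JSJ decomposition of $S^3-M'$ are, up to isometry, exactly those of $S^3-M$ --- the piece $Y$ being replaced by $Y'$, either hyperbolic of equal volume or a Seifert/graph piece contributing nothing --- so the total volume of the hyperbolic part is unchanged, and since the simplicial volume of a $3$-manifold is the sum over its geometric pieces of their simplicial volumes (which equals $\Vol/v_3$ on a hyperbolic piece, $v_3$ the volume of the regular ideal tetrahedron), $\|S^3-M'\|=\|S^3-M\|$.

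The step I expect to be the main obstacle is the isotopy placing $P$ into a single JSJ piece: every circle of $P\cap\mathcal{T}$ is peripheral in $P$ and therefore cuts off from $P$ an essential annulus running from a JSJ torus to $\partial X$, and these annuli must be removed by exploiting the geometry or the Seifert structure of the adjacent piece (a hyperbolic piece has no essential annulus; in a Seifert piece one isotopes the annulus to be vertical and pushes $P$ across). A secondary point demanding care is verifying that the topological half twist really corresponds to the cusp-transposing isometry of $P$ and not to an orientation-reversing map or to a non-isometric mapping class, and identifying the few degenerate configurations --- the two strands lying on one component of $M$, or $C$ being isotopic to a strand meridian --- in which $P$ fails to be essential and $M'$ must be shown directly to be isotopic to $M$.
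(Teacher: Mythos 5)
Your reduction of the theorem to a mutation along the thrice-punctured sphere $P$ is the right starting point, and your treatment of the case where $P$ lies inside a single hyperbolic JSJ piece is essentially Adams' original argument, which the paper also invokes. But there is a genuine gap at exactly the step you flag as the main obstacle: it is \emph{not} true in general that $P$ can be isotoped off the decomposing tori into a single geometric piece, and your proposed fix does not close the hole. An innermost curve of $P\cap\mathcal{T}$ cuts off an annulus $A\subset P$ running from a decomposing torus $T_i$ to a cusp of $S^3-M$ (either the cusp of the crossing circle $C$ or a strand cusp). Such an annulus has its two boundary circles on \emph{different} boundary tori of the adjacent piece, so it is never boundary-parallel; and it is incompressible because its core is freely homotopic to $C$ or to a meridian, neither of which is trivial when $M$ is non-split. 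Hence when the adjacent piece is Seifert-fibered, $A$ is an essential vertical annulus and the intersection circle cannot be removed by isotopy --- there is no way to ``push $P$ across'' a Seifert piece through an essential annulus. Configurations where this happens are easy to produce (e.g.\ $C$ encircling two strands of a cable inside a companion solid torus), and they are precisely the cases 1, 2 and 5 of Figure \ref{Torus} that occupy most of the paper's proof. Once $P$ genuinely crosses a decomposing torus, your subsequent argument (totally geodesic realization of $P$ in one hyperbolic piece, cut and reglue by an isometry) has nothing to act on.

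A second, related problem is your assertion that the geometric decomposition of $S^3-M'$ is obtained from that of $S^3-M$ by replacing the single piece $Y$ by $Y'$ and leaving everything else alone. When $P$ meets the tori, the decomposing tori themselves must be modified by the regluing, and the combinatorics can change: in the situation of Figure \ref{Torus5} a single decomposing torus of $S^3-M$ can become two tori of $S^3-M'$, or two tori can merge into one, so the pieces do not match up bijectively and one must insert additional essential tori to cut the new pieces into ones homeomorphic to the old (this is how the paper handles case 5). So the missing content is not a refinement of the annulus argument but a separate analysis of how decomposing tori interact with the twice-punctured disk bounded by $C$ and how they transform under the half-twist; without it the proof covers only the subcase in which no essential torus meets $P$.
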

 
 \begin{figure}
  \centering
  \begin{subfigure}[b]{0.4 \textwidth}
  \centering
  \includegraphics[scale=0.36]{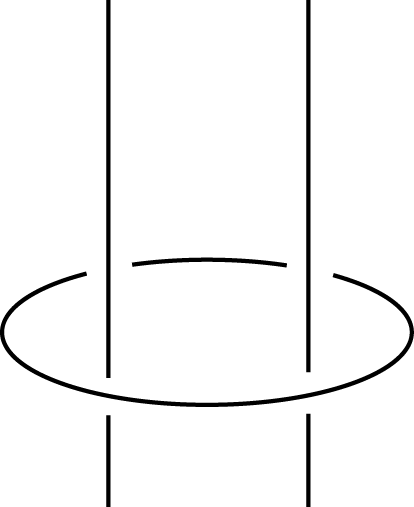}
  \caption{Before the replacement}
  \label{AugmentingA}
  \end{subfigure}
  \begin{subfigure}[b]{0.4 \textwidth}
  \centering
  \includegraphics[scale=0.36]{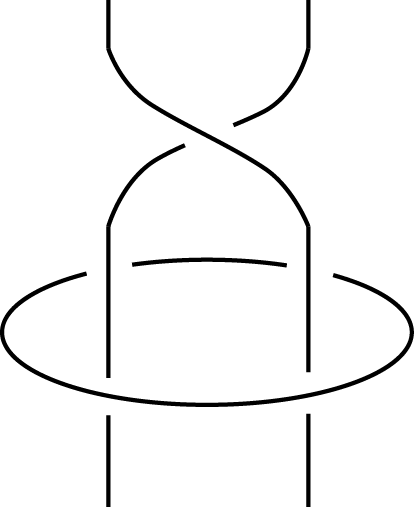} 
  \caption{After the replacement}
  \label{AugmentingB}
  \end{subfigure}
  \caption{The change of the link diagram that does not change the volume}
  \label{Augmenting}
  \end{figure}

As a corollary, the upper bounds for volume proved by Marc Lackenby, Ian Agol and Dylan Thurston \cite{Lackenby:Volume} generalize from hyperbolic volume to simplicial volume. However our original motivation comes from a new refined bound, which was originally established only for hyperbolic volume of \textit{alternating} links in \cite{DasbachTsvietkova} (and recently improved, using different techniques, by Colin Adams in \cite{Adams:Bipyramids}). To generalize this bound beyond alternating links, the use of simplicial volume is necessary, even under the assumption that the original link is hyperbolic.

In a link diagram, a \textit{twist region} is a sequence of bigons that is not a part of a larger sequence of bigons, or just a crossing. By $t_j$ we denote the number of twist regions with $j$ half-twists, and by $g_j$ the number of twist regions with at least $j$ half-twists.

 \begin{reptheorem1}
For a diagram $D$ of a non-split link $K$ the simplicial volume of $S^3-K$ is at most $10 g_4(D)+ 8t_3(D)+6t_2(D)+4 t_1(D) - a$,
  where $a =10$ if $g_4$ is non-zero, $a=7$ if $t_3$ is non-zero, and $a=6$ otherwise. In particular, if $K$ is a hyperbolic  knot, the hyperbolic volume
 $$ Vol(S^3- K) \leq (10 g_4(D)+ 8t_3(D)+6t_2(D)+4 t_1(D) - a ) v_3, $$
 where $v_3$ is the volume of a regular ideal hyperbolic tetrahedron. 
 \end{reptheorem1}
 

 \section{Adding a half-twist without changing the volume}

In this section, we generalize Adams' result \cite{Adams:3punctured} to the simplicial volume. For the background on simplicial volume, we refer the reader to \cite{Gromov:Volume}, and to Chapter 6 of \cite{Thurston:Book}.
 
 \begin{theorem} \label{Thm Augmentation} Let $M$ be a non-split link with a
  link diagram for which some part appears as in Figure \ref{AugmentingA}. Let $M'$ be the link obtained by
  replacing the tangle in the diagram of $M$ depicted in Figure \ref{AugmentingA} by the tangle
  in Figure \ref{AugmentingB}. Then the simplicial volume of $S^3 - M'$ equals the simplicial volume of $S^3- M$
 \end{theorem}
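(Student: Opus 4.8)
The plan is to reduce the statement to a computation with the Jaco--Shalen--Johannson (JSJ) decomposition and the additivity of Gromov's simplicial norm under gluing along incompressible tori. First I would observe that augmenting a crossing of $M$ (adding the crossing circle $C$) produces a link $L = M \cup C$, and that the complement $S^3 - L$ contains an essential twice-punctured disk bounded by $C$. Cutting $S^3 - M$ along this disk and regluing after a full twist along it yields $S^3 - M'$; equivalently, $S^3 - M$ and $S^3 - M'$ are related by a Dehn surgery on $C$. The crucial topological fact, going back to Adams' argument with three-punctured spheres, is that $S^3 - (M \cup C)$, $S^3 - M$, and $S^3 - M'$ all contain a canonical four-punctured sphere / three-punctured sphere which is incompressible, and that removing the half-twist only changes the link complement by cutting along this surface and regluing by a homeomorphism of the (three- or four-)punctured sphere. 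Since the simplicial volume is a topological invariant and, by Gromov, is additive under gluing along incompressible tori (and unaffected by gluing along surfaces of nonnegative Euler characteristic, as three-punctured spheres are), the two complements must have the same simplicial volume.

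Concretely, the key steps, in order, are: (1) Set $L = M \cup C$ where $C$ is the crossing circle; show $S^3 - L$ is irreducible and atoroidal-up-to-the-obvious-tori by the usual argument that augmented links are hyperbolic or have a controlled decomposition — but more robustly, simply invoke that $\|S^3 - M\|$ can be read off the JSJ pieces of $S^3 - M$. (2) Identify the three-punctured sphere $P$ in $S^3 - M$ arising near the half-twist (its existence is exactly the hypothesis that the diagram looks like Figure \ref{AugmentingA}), and note $P$ is essential since $M$ is non-split and the tangle is as shown. (3) Show that $S^3 - M'$ is obtained from $S^3 - M$ by cutting along $P$ and regluing via the homeomorphism of $P$ that realizes the half-twist; this is the content of Adams' picture. (4) Apply Gromov's additivity: $\|N_1 \cup_\Sigma N_2\| = \|N_1\| + \|N_2\|$ when $\Sigma$ is a disjoint union of incompressible tori, and more generally cutting and regluing along an incompressible surface with $\chi \geq 0$ (a torus, annulus, or three-punctured sphere boundary behavior) does not change the simplicial volume — here one uses that $P$ being a three-punctured sphere means the gluing is along tori in the actual JSJ sense after one understands how $P$ sits relative to the torus decomposition. (5) Conclude equality.

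The main obstacle I expect is step (3) combined with making step (4) rigorous: one must argue carefully that the JSJ decompositions of $S^3 - M$ and $S^3 - M'$ have the same pieces. Adams' original proof for hyperbolic volume uses that the three-punctured sphere is totally geodesic and that cut-and-reglue along a totally geodesic three-punctured sphere preserves the hyperbolic structure piecewise; for simplicial volume there is no geometry, so instead I would phrase everything in terms of the compact cores and the canonical tori. The subtlety is that the three-punctured sphere $P$ has cusps on both sides, so ``cutting along $P$'' interacts with the cusp cross-section tori; the clean statement is that the half-twist move corresponds to a homeomorphism between $S^3 - M$ and $S^3 - M'$ on the complement of a neighborhood of a single essential annulus (or torus) in the toroidal decomposition, and Gromov--Thurston additivity then applies verbatim. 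I would handle the non-hyperbolic JSJ pieces (Seifert-fibered, with zero simplicial volume) and the hyperbolic pieces uniformly by noting the move either preserves a piece or merges/splits Seifert pieces in a way that contributes $0$ on both sides. A secondary point to address is the non-split hypothesis: it guarantees $P$ is not boundary-parallel and that the complement is irreducible, so the JSJ machinery and Gromov additivity are available without a connect-sum correction term.
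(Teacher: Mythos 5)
Your overall strategy---reduce to the toroidal (JSJ) decomposition, note that Seifert pieces contribute zero, and track how the half-twist move interacts with the decomposition---is the same skeleton as the paper's proof. But there are three genuine problems in how you propose to execute it.

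First, your step (4) rests on a false statement: a three-punctured sphere has Euler characteristic $-1$, not ``nonnegative Euler characteristic.'' Gromov--Thurston additivity of the simplicial volume under cutting and regluing applies along incompressible tori and annuli (amenable, $\chi=0$); it does \emph{not} apply along a three-punctured sphere, whose fundamental group is free of rank two. Cut-and-reglue along an essential surface of negative Euler characteristic can change the simplicial volume in general, so you cannot ``apply Gromov's additivity verbatim'' to the surface $P$. This also means your plan to avoid geometry entirely cannot work in the base case: when the piece containing $P$ is hyperbolic, $S^3-M$ and $S^3-M'$ are typically \emph{not} homeomorphic, so no topological-invariance argument gives equality; you must invoke Adams' geometric theorem (the three-punctured sphere is totally geodesic, and regluing by an isometry of it preserves the hyperbolic structure, hence the volume). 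The paper uses Adams' result exactly at this point, piece by piece.

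Second, the claim that the JSJ decompositions of $S^3-M$ and $S^3-M'$ ``have the same pieces,'' with mismatches confined to merging or splitting Seifert pieces, is not justified and is not what happens. The real work in the paper is a case analysis of how an essential torus that cannot be isotoped off the tangle ball meets the twice-punctured disk bounded by the crossing circle: a meridian of such a torus must encircle a puncture of that disk, which yields five intersection patterns. In two of these the torus must be replaced by a modified torus $T_j'$ after the regluing; in the fifth, two tori can merge into one (or one can split into two), and the resulting pieces---which need not be Seifert-fibered---are only matched up after inserting additional essential tori to cut them into homeomorphic pieces. Your sketch skips all of this, and it is precisely where the theorem could fail if done carelessly.

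Third, a smaller point: the move is cutting along the twice-punctured disk and regluing after a \emph{half}-twist (which changes the link), not a full twist (which is a Dehn surgery on $C$ and is a different, easier statement). As written, your step describing the relation between $S^3-M$ and $S^3-M'$ conflates the two.
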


\begin{proof} Since $M$ is non-split, $M'$ is non-split as well, and both complements $S^3-M$, $S^3-M'$ are therefore irreducible.  Take the decomposition of $S^3-M$ by disjoint incompressible tori $T_1, T_2, ..., T_n$ embedded simultaneously in $S^3-M$, such that the simplicial volume of $S^3-M$ is the sum of hyperbolic volumes of its hyperbolic parts after decomposing along $T_1, T_2, ..., T_n$.

  \begin{figure}
   \centering
   \begin{subfigure}[b]{0.19 \textwidth}
   \centering
   \includegraphics[scale=0.48]{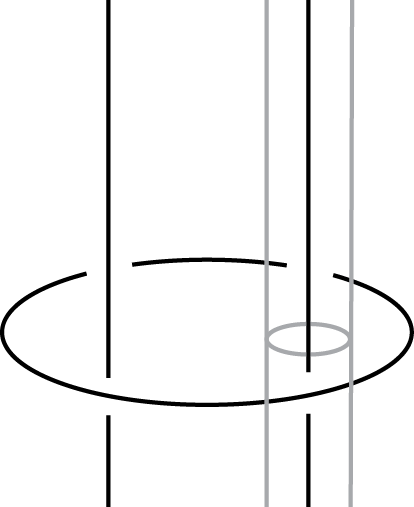}
   \caption{}
   \label{Torus1}
   \end{subfigure}
   \begin{subfigure}[b]{0.19 \textwidth}
   \centering
   \includegraphics[scale=0.48]{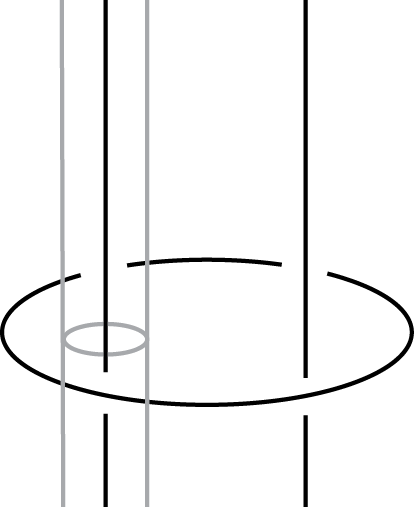} 
   \caption{}
   \label{Torus2}
   \end{subfigure}
   \begin{subfigure}[b]{0.19 \textwidth}
   \centering
   \includegraphics[scale=0.48]{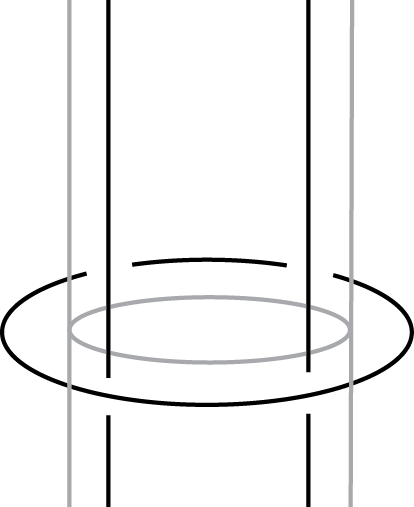} 
   \caption{}
   \label{Torus3}
   \end{subfigure}
   \begin{subfigure}[b]{0.19 \textwidth}
   \centering
   \includegraphics[scale=0.48]{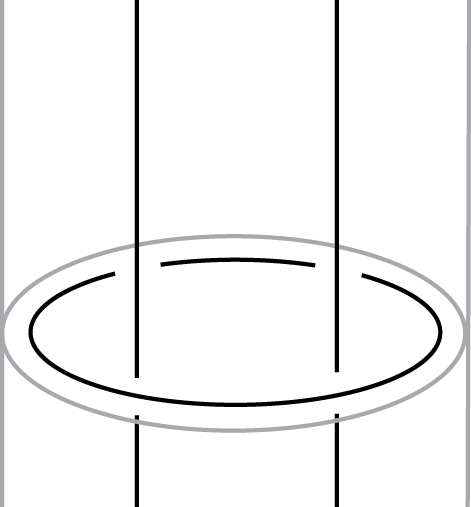} 
   \caption{}
   \label{Torus4}
   \end{subfigure}
    \begin{subfigure}[b]{0.19 \textwidth}
      \centering
      \includegraphics[scale=0.48]{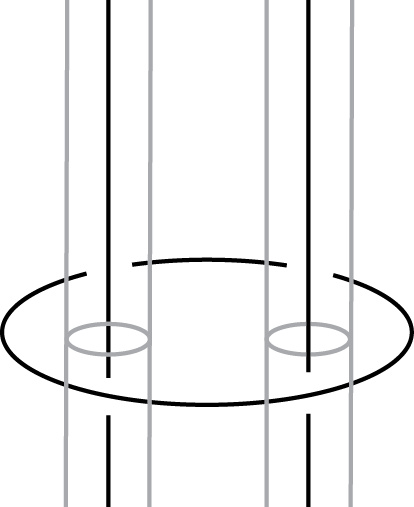} 
      \caption{}
      \label{Torus5}
      \end{subfigure}
   \caption{An incompressible torus intersecting $B_T$}
   \label{Torus}
   \end{figure}

    We will construct a toroidal decomposition of $S^3-M'$ that subdivides it into hyperbolic and Seifert-fibered pieces. Consider a ball $B\subset S^3$ that contains the tangle from Figure \ref{AugmentingA}.  The tangle is a properly embedded $1$-dimensional submanifold of $B$ meeting the boundary of $B$ transversely in four points. Denote the complement of the tangle in $B$ by $B_T$ (the boundary of $B_T$ is therefore a 4-punctured sphere). If a torus $T_i$ for some $i$ from 1 to $n$ can be isotoped so that it does not travel through $B_T$, then take the same torus $T_i$ for $S^3-M'$.

 Choose all the tori from $T_1, ..., T_n$ that cannot be isotoped away from $B_T$, and denote the collection by $\mathcal{T}$. Denote the crossing circle from Figure \ref{AugmentingA} by $C$. A $2$-punctured disk bounded by $C$ can be viewed as a $3$-punctured sphere which we denote by $S$. A meridian of every torus from $\mathcal{T}$ travels around a puncture of $S$ (otherwise, the torus is  compressible). Therefore, Figure \ref{Torus} shows all possible patterns of intersection of such a tori and $B_T$ (a torus of the decomposition is shown in grey). We will refer to them as to cases 1-5 respectively. 

 Among the tori in $\mathcal{T}$, choose an innermost one, say $T_j$ for some $j$ from $1$ to $n$. By {\it innermost} we mean that $T_j$ does not contain any other torus from the collection $\mathcal{T}$. An innermost torus is not necessarily unique, but the argument below applies to every innermost torus.

In cases 3 and 4, there is a $3$-punctured sphere inside $T_j$. Hence we can apply the original Adams' result to the piece of $S^3-M$ inside the torus $T_j$. This piece is hyperbolic if and only if the piece of $S^3-M$ inside the similar torus is hyperbolic \cite{Adams:3punctured}. If both pieces are hyperbolic, the volumes of the pieces are equal by \cite{Adams:3punctured}. Otherwise the pieces are Seifert-fibered, and do not contribute to the simplicial volumes of $S^3-M, S^3-M'$ respectively. Moreover, adding a half-twist takes place inside $T_j$ and does not affect any other pieces of the decomposition. Therefore, the simplicial volumes of $S^3-M$ and $S^3-M'$ are equal. 
 
In cases 1 and 2 we substitute the torus $T_j$ by a torus $T_j'$ when passing from $M$ to $M'$ as follows. Due to the symmetry we can consider just case 1. Cut $S^3-M$ and $T_j$ along the $3$-punctured sphere $S$, and re-glue the complement and the torus so as to add a half-twist as in Figure \ref{AugmentingB}. We obtain the complement of $M'$ and a new torus $T'_j$ as follows. The 2-tangle depicted on Figure \ref{AugmentingA} is connected with another $2$-tangle to form the link $M$. Denote this other $2$-tangle by $J$. Similarly, the 2-tangle depicted on Figure \ref{AugmentingB} is connected with $J$ to form the link $M'$. Let $T_j'$ be similar to $T_j$ in the tangle $J$, and parallel to the boundary of $M'$ outside $J$. In Figure \ref{Link} dotted grey lines show where $T_j'$ is different from $T_j$. Note that if we take all other tori for the decomposition of $S^3-M'$ the same as tori of the decomposition of $S^3-M$, then $T_j'$ does not intersect any other tori $T_k$. Otherwise either $T_k$ or $M$ would intersect $T_j$ for some $k=1,2,..., j-1,j+1,..., n$.
 
 \begin{figure}
   \centering
   \begin{subfigure}[b]{0.4 \textwidth}
   \centering
   \includegraphics[scale=0.5]{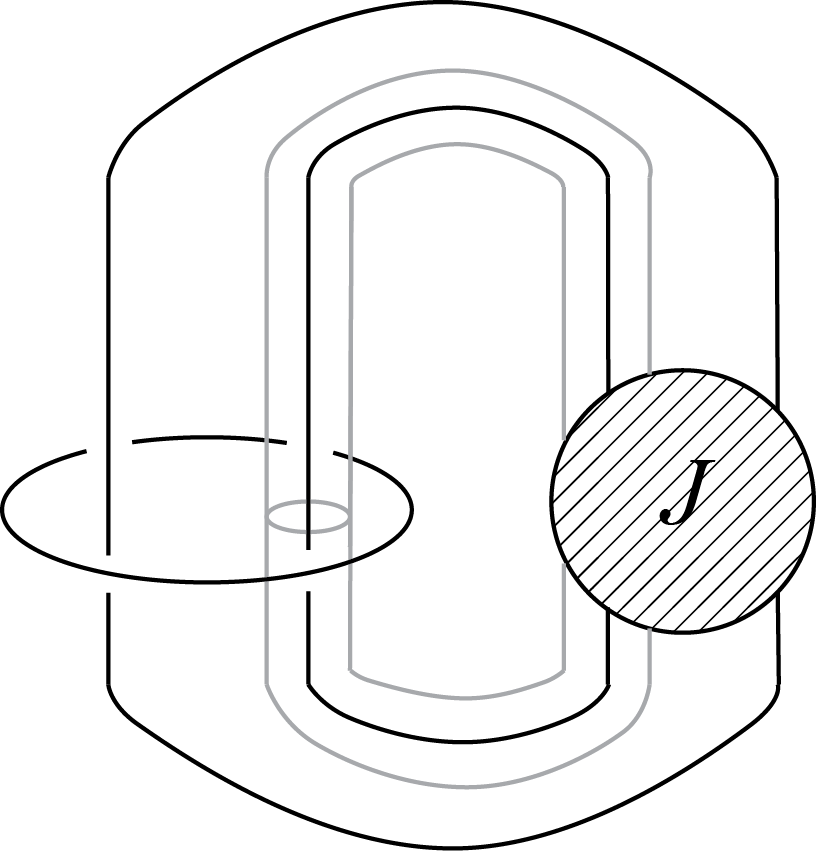}
   \caption{$T_j$ in $S^3-M$}
   \label{Link1}
   \end{subfigure}
   \begin{subfigure}[b]{0.4 \textwidth}
   \centering
   \includegraphics[scale=0.51]{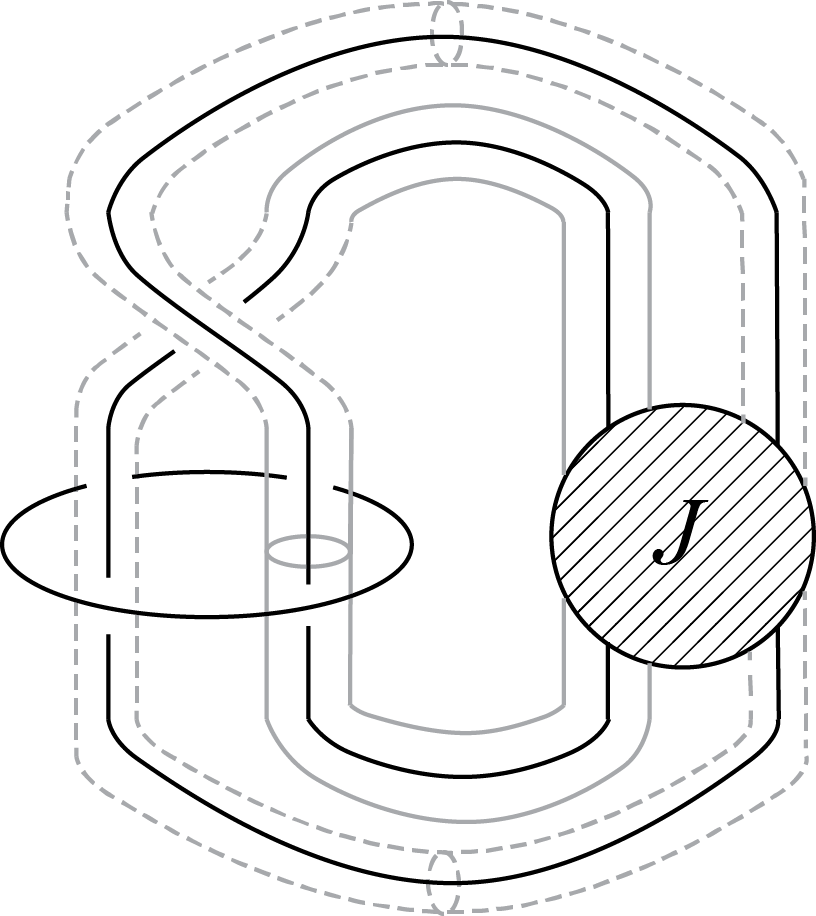} 
   \caption{$T_j'$ in $S^3-M'$}
   \label{Link2}
   \end{subfigure}
    \caption{}
     \label{Link}
     \end{figure}
  
  Let $J'$ be a subtangle of $J$ that is inside $T_j$. Note that inside $T_j$ and $T_j'$ we have homeomorphic pieces of the decomposition as shown in Figure \ref{InsideTorus}. On the figure, the decomposing torus, $T_j$ or $T_j'$, is in gray color, and components of the corresponding link, $M$ or $M'$, inside the torus, are in black. Hence, the interiors of $T_j$ and $T_j'$ contribute the same amounts to the simplicial volumes of $S^3-M$, and $S^3-M'$ respectively.
 
Now let us look at the pieces of the decompositions outside $T_j$ and $T_j'$. Note that $S^3-(M\cup T_j)$ and $S^3-(M'\cup T'_j)$ look the same except for the complement of the fragment from Figure \ref{Augmenting}. First suppose there are no other tori of the decomposition outside $T_j$ from the collection $\mathcal{T}$. Then we can use the original result from \cite{Adams:3punctured} to state that the volumes of the pieces of the decomposition outside $T_j$, $T_j'$ are equal, as we did above in cases 3 and 4. If there are other tori in $\mathcal{T}$, choose an innermost one in the sense that it possibly contains $T_j$, but does not contain any other tori from $\mathcal{T}$. Since both $T_j$ and $T_j'$ are boundary parallel in the tangle from Figure \ref{Augmenting}, we can again consider cases 1-5 for this new torus, and repeat the argument.

  \begin{figure}
    \centering
    \begin{subfigure}[b]{0.45 \textwidth}
    \centering
   \includegraphics[scale=0.4]{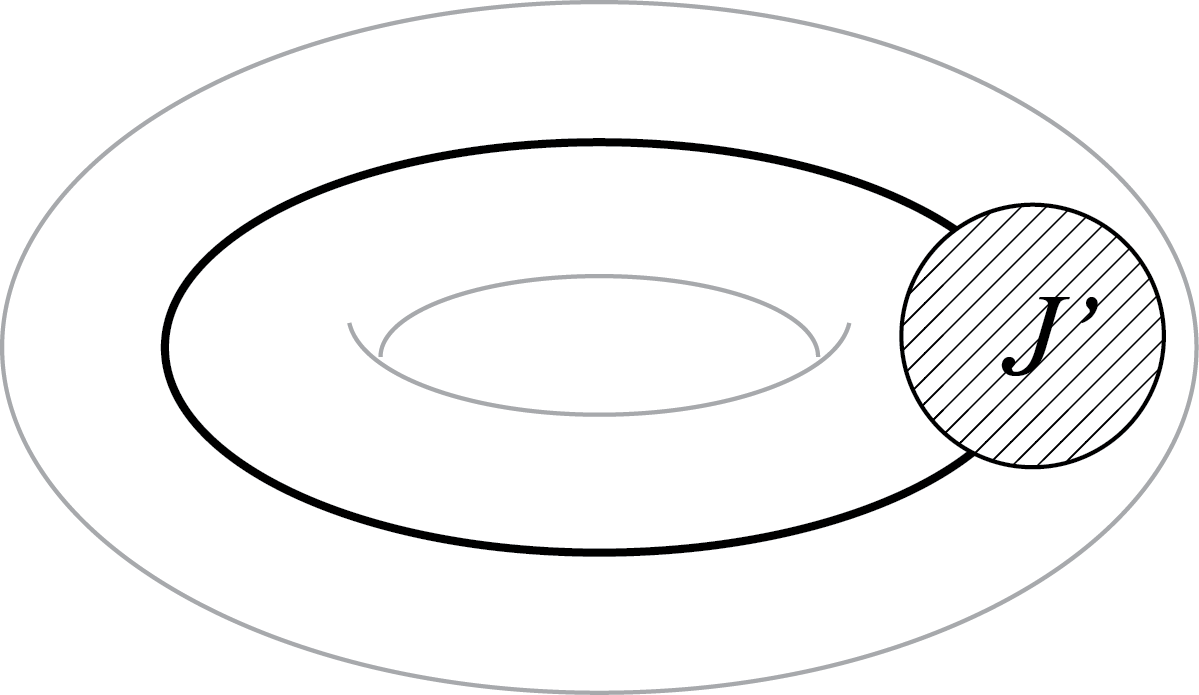}
   
    \caption{Inside the decomposing torus $T_i$ or $T_i'$}
     \label{InsideTorus}
    \end{subfigure}
    \begin{subfigure}[b]{0.45 \textwidth}
     \centering
     \includegraphics[scale=0.35]{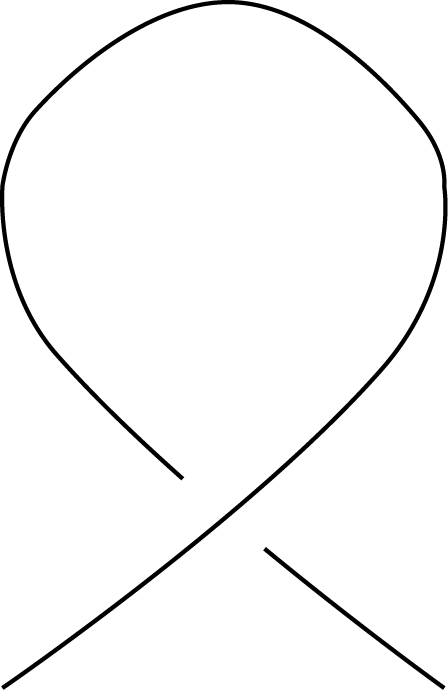}
      
      \caption{A loop in link diagram}
      \label{Loop}
      \end{subfigure}
      \caption{}
       \end{figure}

     In case 5, there is either the torus $T_j$ that intersects the sphere $S$ twice in $S^3-M$, or two (innermost) tori $T_j, T_k$ that intersect $S$ simultaneously as shown on Figure \ref{Torus5} in $S^3-M$. As in cases 1 and 2, cut $S^3-M$ and the torus/tori along the $3$-punctured sphere $S$, and re-glue the complement and the torus/tori so as to add a half-twist as in Figure \ref{AugmentingB}, obtaining $M'$. If we had one torus $T_j$ intersecting $S$ in $S^3-M$, we obtained either a new torus $T_j'$ or two new tori $T_j', T_j''$ instead. If we had two tori $T_j, T_k$ intersecting $S$ in $S^3-M$, they became one torus $T_j'$. The piece of the decomposition of $S^3-M'$ outside the new torus/tori contains a three-punctured sphere, and has the same volume as the the piece of $S^3-M$ outside the initial torus/tori by the original Adams' result. The pieces of $S^3-M$ and $S^3-M'$  inside these torus/tori are not necessarily homeomorphic, but we can decompose them further by essential tori into homeomorphic pieces as follows.  
     
     First, consider the case in which two tori $T_j, T_k$ decomposing $S^3-M$ become one torus $T_j'$ in $S^3-M'$. Since $T_j, T_k$ are not boundary parallel in $S^3-M$, each contains a tangle $J_1, J_2$ of $M$ respectively, and can be made boundary parallel elsewhere. In particular, the piece of the decomposition inside each of $T_j, T_j$ looks similar to the piece inside the torus on Figure \ref{InsideTorus}, but with $J_1$ or $J_2$ instead of $J'$ respectively. The piece of the decomposition inside the new torus $T_j'$ in $S^3-M'$ is then depicted on Figure \ref{Case5a} ($T_j'$ might be knotted in $S^3-M$, but this does not affect our reasoning). Add an essential torus $T_j''$ inside $T_j'$ that contains the tangle $J_1$ but is boundary parallel elsewhere. It decomposes the piece of $S^3-M'$ inside $T_j'$ into two pieces, and each piece is homeomorphic to the piece of $S^3-M$ inside the torus $T_j$ or $T_k$. Therefore, the simplicial volumes of $S^3-M, S^3-M'$ are equal.
     
     \begin{figure}
        \centering
        \begin{subfigure}[b]{0.4 \textwidth}
        \centering
        \includegraphics[scale=0.53]{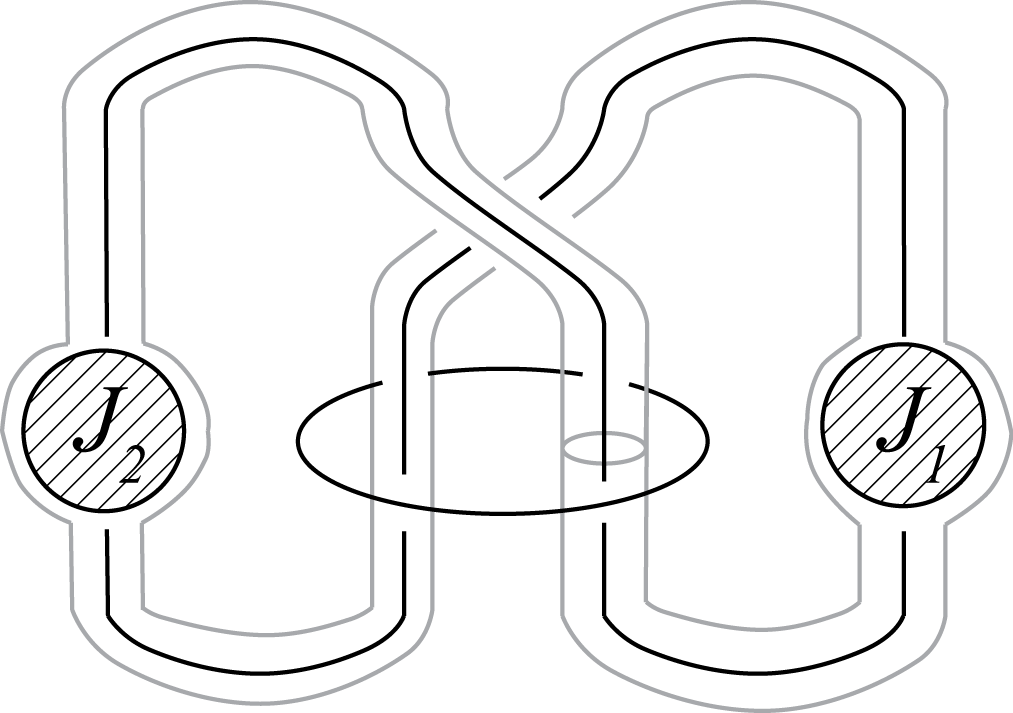}
        \caption{The torus $T'_j$.}
        \label{Case5a}
        \end{subfigure}
        \begin{subfigure}[b]{0.4 \textwidth}
        \centering
        \includegraphics[scale=0.52]{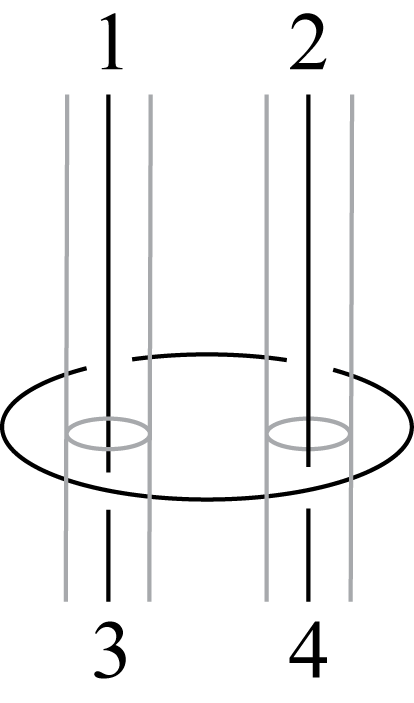} 
        \caption{}
        \label{Case5b}
        \end{subfigure}
         \caption{}
          \label{Case5}
          \end{figure}
     
     Now suppose there is just one torus $T_j$ depicted on Figure \ref{Torus5} in $S^3-M$, and it becomes two tori $T_j', T_k'$ in $S^3-M'$. The piece inside $T_j$ is then similar to the piece inside the torus in Figure \ref{Case5a}. Repeat the reasoning from the above paragraph, but now interchanging $M$ and $M'$ (\textit{i.e.} add an extra torus decomposing $S^3-M$ and lying inside $T_j$).
     
     Lastly, suppose we have one torus $T_j$ depicted on Figure \ref{Torus5} that becomes one torus $T_j'$ after re-gluing. Label the ends of the strands of the tangle of $M$ from Figure \ref{Torus5}  by 1, 2, 3, 4 respectively as in Figure \ref{Case5b}. For a single torus $T_j$ to become a single torus $T_j'$, 1 ought to be connected with 2, and 3 with 4 in $M$. The pieces of $M, M'$ inside the tori $T_j, T_j'$ respectively then consist of the tangles $J_1, J_2$ for $T_j$ and $J_1, J_2'$ from $T_j'$, where $J_2'$ is a mirror image of $J_2$.  And the piece of $S^3-M$ or $S^3-M'$ inside $T_j$ or $T_j'$ respectively is similar to the piece inside the torus in Figure \ref{Case5a} (with $J_2'$ instead of $J_2$ for $M'$). We can then decompose both $T_j, T_j'$ further into homeomorphic pieces as we did with one of the tori in two preceding paragraphs. For the pieces outside $T_j, T_j'$, if there are no other tori, apply the original Adams result. If there are other tori, choose an innermost one and repeat the reasoning.

          Repeat this argument until we have a torus that adheres to the cases 3 or 4, or until we exhaust the collection $\mathcal{T}$. This inductive argument proves that the simplicial volumes of $S^3-M$ and $S^3-M'$ are equal. 
 \end{proof}

\section{Refined upper bound for volume}

The work of Lackenby, and of Agol and Thurston \cite{Lackenby:Volume} shows that for a volume bound formulated in terms of the twist number $t$ of a link diagram, $10$ is an optimal constant, and the bound $10(t-1)v_3$ is sharp. In \cite{DasbachTsvietkova}, we refine this bound by involving more parameters into it. It provides an advantage in estimating volumes of many links, and it is used to show that the first and last three coefficients of the colored Jones polynomial correlate with the volume. The latter is an extension of results in \cite{DasbachLin:HeadAndTail, DL:VolumeIsh}. However, the refined bound was only proven for hyperbolic alternating links, and the argument does not carry over to other links. In this section, we provide a proof for all links.

Assume that the diagram of a link $K$ is reduced in the sense that there are no loops as shown in Figure \ref{Loop}, and that $K$ is not a split link. We start by modifying the link diagram through the addition of crossing circles. In particular, every twist region in the diagram that has at least four crossings (as, for example, in Figure \ref{Twist1}) is encircled by a crossing circle (as in Figure \ref{Twist2}). Denote the resulting link by $N$.   

\begin{figure}
    \centering
    \begin{subfigure}[b]{0.2 \textwidth}
    \centering
   \includegraphics[scale=0.45]{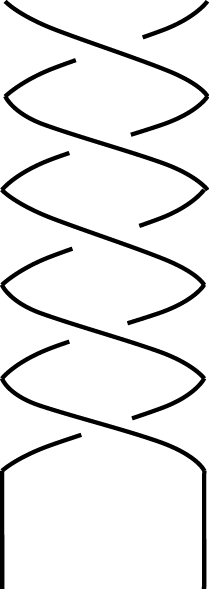}
   
    \caption{}
     \label{Twist1}
    \end{subfigure}
    \begin{subfigure}[b]{0.2 \textwidth}
     \centering
     \includegraphics[scale=0.45]{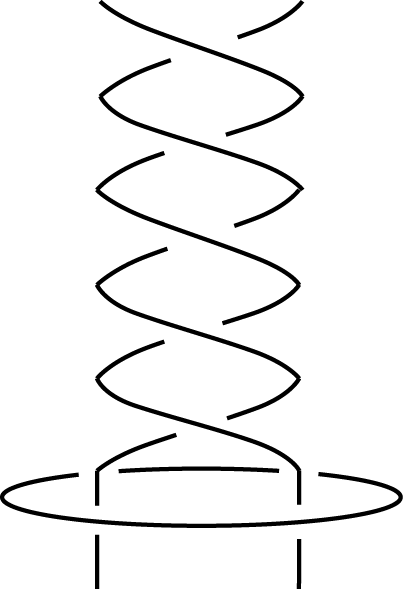}
      \caption{}
      \label{Twist2}
      \end{subfigure}
      \begin{subfigure}[b]{0.2 \textwidth}
           \centering
           \includegraphics[scale=0.45]{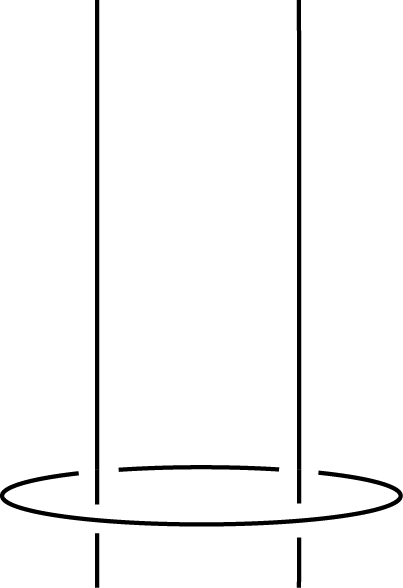}
            
            \caption{}
            \label{Twist3}
            \end{subfigure}
           \begin{subfigure}[b]{0.2 \textwidth}
                 \centering
                 \includegraphics[scale=0.3]{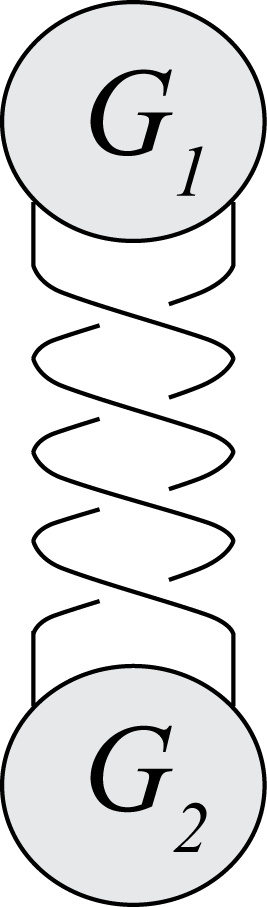}
                  
                  \caption{}
                  \label{KnotSum1}
                  \end{subfigure}
            \caption{}
       \end{figure}

We will briefly recall the argument from \cite{DasbachTsvietkova} for alternating links. If $K$ is a hyperbolic alternating link, then $N$ is a link that is called an augmented alternating link. Such links are hyperbolic by \cite{Adams:Augmented}. Moreover, $S^3 -K$ can be obtained from $S^3 - N$ by Dehn fillings of the tori that correspond to crossing circles. Therefore, the volumes satisfy $\Vol(S^3 - K)\leq\Vol(S^3 - N)$ by Theorem 6.5.6 in \cite{Thurston:Book} (the equality holds if and only if none of the twists is augmented, and the link is unchanged). Next delete all crossings in the twists that we encircled, obtaining a new link $L$. The volume of $S^3 - L$  is equal to the volume of $S^3 - N$ by Corollary 5 of \cite{Adams:3punctured}, and therefore $\Vol(S^3 - K)\leq\Vol(S^3 - L)$. 

 If $K$ is not alternating (and possibly not hyperbolic), $N$ might not be hyperbolic. Then consider the simplicial volume of $S^3-N$.  Proposition 6.5.2 from \cite{Thurston:Book} implies that the simplicial volume of the $S^3 - K$ is less than or equal to the simplicial volume of $S^3 - N$. We apply the proposition to the manifold $M$ that is $S^3-N$ with boundary, and together with the necessary number of solid tori that have zero simplicial volume. 
 
  Now delete all crossings in the encircled twists, obtaining $L$ (as in Figure \ref{Twist3}). If $N$ is non-split, then Theorem \ref{Thm Augmentation} implies that the simplicial volume of 
 $S^3 - L$  is equal to the simplicial volume of $S^3 - N$, and therefore the simplicial volume of  $S^3-K$ is less or equal to the simplicial volume of $S^3-L$. If $K$ is, in addition, hyperbolic, then the hyperbolic volume $\Vol(S^3-K)$ is less than or equal to the simplicial volume of $S^3-N$  by Lemma 6.5.4 in \cite{Thurston:Book}. 
 
 Suppose $N$ is split. Since $K$ is not a split link, the only components of $N$ that can be separated by $2$-spheres embedded in $S^3-N$ are the introduced crossing circles. A crossing circle can be separated by a $2$-sphere from the rest of the diagram only if it was introduced at a nugatory twist $c$ of $K$. Suppose $c$ connects $K$ into two 2-tangles, $G_1$ and $G_2$, as on Figure \ref{KnotSum1}. Then we can manipulate the diagram of $K$ so that there is no more $c$, and the diagram of $K$ is a knot sum of two tangles, either $G_1$ and $G_2$, or $G_1$ and the mirror image of $G_2$. Once this is done for every nugatory twist, augment the resulting diagram instead of augmenting the initial diagram of $K$. The resulting link $N$ is non-split, and the above argument applies.  
 
 We therefore obtain the following.

\begin{theorem}\label{Thm Upper Bound} Given a diagram $D$ of a non-split link $K$, with $t_i$ twist regions of precisely $i$ crossings, and $g_i$ twist regions of at least $i$ crossings, the simplicial volume of $S^3-K$ is at most $10 g_4(D)+ 8t_3(D)+6t_2(D)+4 t_1(D) - a$,
  where $a =10$ if $g_4$ is non-zero, $a=7$ if $t_3$ is non-zero, and $a=6$ otherwise. In particular, if $K$ is a hyperbolic  knot, the hyperbolic volume
 $$ Vol(S^3- K) \leq (10 g_4(D)+ 8t_3(D)+6t_2(D)+4 t_1(D) - a ) v_3, $$
 where $v_3$ is the volume of a regular ideal hyperbolic tetrahedron. 
\end{theorem}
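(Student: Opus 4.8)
The plan is to combine the reduction carried out in the discussion above with a combinatorial ideal-polyhedral decomposition of $S^3-L$, and then to convert a tetrahedron count into the bound. Write $\|M\|$ for the simplicial volume of a compact orientable $3$-manifold $M$. By the discussion preceding the statement, $\|S^3-K\|\le\|S^3-L\|$, where $L$ is the link obtained from $D$ by encircling every twist region with at least four crossings by a crossing circle and then deleting all crossings in those twist regions. Since $\|M\|$ is bounded above by the number of tetrahedra in any ideal triangulation of $M$, it suffices to produce an ideal triangulation of $S^3-L$ with at most $10g_4(D)+8t_3(D)+6t_2(D)+4t_1(D)-a$ tetrahedra.

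To build one, I would decompose $S^3-L$ into ideal polyhedra by cutting along the twice-punctured disks bounded by the crossing circles and along the projection sphere, as in \cite{Lackenby:Volume}. This step is purely combinatorial and uses no hyperbolicity, so it applies even when $L$ is not hyperbolic; for the Gromov-norm estimate only a topological decomposition into ideal polyhedra is needed. In the two resulting polyhedra, the crossings surviving inside the short twist regions (those of $D$ with one, two, or three crossings) appear as ideal edges in the manner of Menasco, while each flattened augmented region contributes the standard augmented-twist-region piece.

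Next I would triangulate each polyhedron and bound the tetrahedron count by attributing ideal vertices, edges, and faces to the twist region of $D$ they come from, following the accounting of \cite{DasbachTsvietkova}: a flattened augmented region (a twist region of $D$ with at least four crossings) contributes at most $10$ tetrahedra, a retained triple crossing at most $8$, a retained bigon at most $6$, and a single retained crossing at most $4$, giving a running total of $10g_4+8t_3+6t_2+4t_1$. Finally one distinguished piece of the decomposition can be triangulated more economically, and the resulting saving is $10$ when some twist region has at least four crossings, $7$ when instead some twist region has exactly three crossings, and $6$ otherwise; this yields the correction $-a$, hence $\|S^3-L\|\le 10g_4+8t_3+6t_2+4t_1-a$, so $\|S^3-K\|$ obeys the same bound. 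If in addition $K$ is a hyperbolic knot, then $\Vol(S^3-K)=v_3\,\|S^3-K\|$ by Gromov's theorem (Chapter 6 of \cite{Thurston:Book}), and multiplying through by $v_3$ gives the stated volume inequality.

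The main obstacle is the combinatorial tetrahedron count, where two points deserve care. First, one must check that the Lackenby-type polyhedral decomposition still behaves well around the un-augmented short twist regions, so that a twist region with $j\in\{1,2,3\}$ crossings costs only $4$, $6$, or $8$ tetrahedra rather than the $10$ it would cost after full augmentation; this is what makes the statement a genuine refinement of the $10(t-1)v_3$ bound of \cite{Lackenby:Volume} and not merely a reformulation of it. Second, one must justify the precise value of $a$ and its case dependence by identifying, in each case, the distinguished polyhedral piece whose optimal triangulation realizes the saving. Both are exactly the steps supplied by \cite{DasbachTsvietkova} for hyperbolic alternating links; the only new feature here is that the tetrahedron count now bounds the simplicial volume directly, with Theorem \ref{Thm Augmentation} providing the crossing-removal equality $\|S^3-L\|=\|S^3-N\|$ that previously relied on hyperbolicity.
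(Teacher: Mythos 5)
Your proposal takes essentially the same route as the paper: the reduction $\|S^3-K\|\le\|S^3-N\|=\|S^3-L\|$ via augmentation, Thurston's Dehn-filling inequality for simplicial volume, and Theorem \ref{Thm Augmentation}, followed by the polyhedral tetrahedron count of \cite{DasbachTsvietkova} and \cite{Lackenby:Volume}, which bounds the Gromov norm directly because it is a count of ideal tetrahedra. The paper likewise defers that combinatorial count to \cite{DasbachTsvietkova} (including the split-$N$ adjustment at nugatory twists), so the two arguments coincide.
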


\section{Acknowledgments}
 
We are thankful to the anonymous referee of \cite{DasbachTsvietkova}, who suggested to use simplicial volume in order to generalize the upper bound. Moreover, we are grateful to the referee of the current paper for his careful review. 
We acknowledge the support from U.S. National Science Foundation grants DMS-1317942 and DMS-1406588.

\bibliography{WithAnastasiia}

\newcommand{\etalchar}[1]{$^{#1}$}
\providecommand{\bysame}{\leavevmode\hbox to3em{\hrulefill}\thinspace}
\providecommand{\MR}{\relax\ifhmode\unskip\space\fi MR }
\providecommand{\MRhref}[2]{%
  \href{http://www.ams.org/mathscinet-getitem?mr=#1}{#2}
}
\providecommand{\href}[2]{#2}
\begin{thebibliography}{CFK{\etalchar{+}}11}

\bibitem[Ada83]{Adams:Thesis}
Colin~C. Adams, \emph{{Hyperbolic Structures on Link Complements}}, Ph.D.
  thesis, University of Wisconsin, 1983.

\bibitem[Ada85]{Adams:3punctured}
\bysame, \emph{{Thrice-Punctured Spheres in Hyperbolic 3-Manifolds}}, Trans.
  Amer. Math. Soc. \textbf{287} (1985), no.~2, 645--656.

\bibitem[Ada86]{Adams:Augmented}
\bysame, \emph{{Augmented alternating link complements are hyperbolic}},
  Low-dimensional topology and Kleinian groups (Coventry/Durham, 1984), London
  Math. Soc. Lecture Note Ser., vol. 112, Cambridge Univ. Press, Cambridge,
  1986, pp.~115--130.

\bibitem[Ada15]{Adams:Bipyramids}
\bysame, \emph{{Bipyramids and bounds on volumes of hyperbolic links}},
  preprint (2015).

\bibitem[CFK{\etalchar{+}}11]{Purcell:TwistedTorus}
Abhijit Champanerkar, David Futer, Ilya Kofman, Walter Neumann, and Jessica~S.
  Purcell, \emph{{Volume bounds for generalized twisted torus links}}, Math.
  Res. Lett. \textbf{18} (2011), no.~6, 1097--1120.

\bibitem[DL06]{DasbachLin:HeadAndTail}
Oliver~T. Dasbach and Xiao-Song Lin, \emph{{On the head and the tail of the
  colored Jones polynomial}}, Compositio Mathematica \textbf{142} (2006),
  no.~05, 1332--1342.

\bibitem[DL07]{DL:VolumeIsh}
\bysame, \emph{{A volumish theorem for the Jones polynomial of alternating
  knots}}, Pacific J. Math. \textbf{231} (2007), no.~2, 279--291.

\bibitem[DT15]{DasbachTsvietkova}
Oliver Dasbach and Anastasiia Tsvietkova, \emph{{A refined upper bound for the
  hyperbolic volume of alternating links and the colored Jones polynomial}},
  Math. Res. Lett. \textbf{22} (2015), no.~4, 1047--1060.

\bibitem[FKP08]{FKP:Bounds}
David Futer, Efstratia Kalfagianni, and Jessica~S. Purcell, \emph{{Dehn
  filling, volume, and the Jones polynomial}}, J. Differential Geom.
  \textbf{78} (2008), no.~03, 429--464.

\bibitem[FKP13]{FKP:Guts}
\bysame, \emph{{Guts of surfaces and the colored Jones polynomial}}, Lecture
  Notes in Mathematics, vol. 2069, Springer, Heidelberg, 2013.

\bibitem[Gro82]{Gromov:Volume}
Michael Gromov, \emph{{Volume and bounded cohomology}}, Inst. Hautes \'{E}tudes
  Sci. Publ. Math. \textbf{56} (1982), 5--99.

\bibitem[Lac04]{Lackenby:Volume}
Marc Lackenby, \emph{{The volume of hyperbolic alternating link complements}},
  Proceedings of the London Mathematical Society \textbf{88} (2004), no.~1,
  204--224.

\bibitem[Pur07]{Purcell:Twisted}
Jessica~S. Purcell, \emph{{Volumes of highly twisted knots and links}}, Algebr.
  Geom. Topol. (2007), no.~7, 93--108.

\bibitem[SW95]{SakumaWeeks}
Makoto Sakuma and Jeffrey~R. Weeks, \emph{{Examples of canonical decomposition
  of hyperbolic link complements}}, Japan. J. Math. (N. S.) \textbf{21} (1995),
  no.~2, 393--439.

\bibitem[Thu02]{Thurston:Book}
William~P. Thurston, \emph{{The Geometry and Topology of Three-Manifolds}},
  electronic ed., 2002.

\bibitem[Tsv14]{Tsvietkova:2Bridge}
Anastasiia Tsvietkova, \emph{{Exact volume of hyperbolic 2-bridge links}},
  Comm. Anal. Geom. \textbf{22} (2014), no.~5, 881--896.

\bibitem[TT14]{TT:HyperbolicStructures}
Morwen Thistlethwaite and Anastasiia Tsvietkova, \emph{An alternative approach
  to hyperbolic structures on link complements}, Algebr. Geom. Topol.
  \textbf{14} (2014), no.~3, 1307--1337.

\end{thebibliography}
\bibliographystyle {amsalpha}

\parbox[t]{2.8in}{
Oliver Dasbach\\
Department of Mathematics\\
Louisiana State University\\
Baton Rouge, Louisiana 70803, USA.\\
kasten@math.lsu.edu}
\qquad
\parbox[t]{2.8in}{
Anastasiia Tsvietkova\\
Department of Mathematics\\
University of California, Davis \\
One Shields Ave, Davis, CA 95616\\
tsvietkova@math.ucdavis.edu
}

\end{document}